\newcommand{\R}{\mathbb{R}}
\newcommand{\Z}{\mathbb{Z}}
\newcommand{\rat}{R}
\newcommand{\s}{\mathcal{S}}
\def\harf{
\ifinner
    {\frac{1}{2}}
\else
    {\hbox{$\frac{1}{2}$}}
\fi
}
\begin{document}

\title[Markov chain approximations to diffusions]{The non-locality of Markov chain 
approximations to two-dimensional 
diffusions}

\author{C. Reisinger}
\address{Mathematical Institute, University of Oxford,  Woodstock Road, Oxford, OX2 6GG, United Kingdom,
\texttt{christoph.reisinger@maths.ox.ac.uk}}

\begin{abstract}
In this short paper, we consider discrete-time Markov chains on lattices as approximations to continuous-time diffusion processes.
The approximations can be interpreted as finite difference schemes for the generator of the process.
We derive conditions on the diffusion coefficients which permit transition probabilities to match locally first and second moments.
We derive a novel formula which expresses how the matching becomes more difficult for larger (absolute) correlations and strongly anisotropic processes, such that instantaneous moves to more distant neighbours on the lattice have to be allowed. Roughly speaking, for non-zero correlations, the distance covered in one timestep is proportional to the ratio of volatilities in the two directions.
We discuss the implications to Markov decision processes and the convergence analysis of approximations to Hamilton-Jacobi-Bellman equations in the Barles-Souganidis framework.
\end{abstract}
\bigskip

\maketitle

\hspace{-.45cm}\textit{Key Words:}
Two-Dimensional Diffusions;
Markov Chain Approximations;
Monotone Finite Difference Schemes
\bigskip

\hspace{-.45cm}\textit{2010 Mathematics Subject Classification:} 60J10, 60J60, 65C40, 65M06 \bigskip

\newtheorem{theorem}{Theorem}[section]
\newtheorem{prop}{Proposition}[section]
\newtheorem{la}[theorem]{Lemma}
\newtheorem{corollary}[theorem]{Corollary}
\newtheorem{remark}[theorem]{Remark}
\newtheorem{prob}[theorem]{Problem}	
\newtheorem{definition}[theorem]{Definition}

\section{Introduction}

The approximation of multi-dimensional continuous-time stochastic processes by Markov chains is a question of practical importance, which leads to a surprisingly complex answer.
Our interest is motivated by stochastic control problems, as they arise very prominently in mathematical finance and other application areas.

The computation of optimal control strategies usually relies on the computation of a value function, this being the expected value of an objective  under the optimal control strategy, as a function of the starting point. Approximation of this function by dynamic programming is usually either mesh-based or estimated by regression of conditional expectations 
over simulated trajectories. We focus here on the former approach.

A classical strategy consists in the approximation of the continuous-time stochastic process by a discrete-time Markov chain on a lattice, with a certain time step and mesh width of the lattice, as in the standard reference \cite{kushner2013numerical}. Then it can be shown, under relatively mild assumptions on the underlying diffusions, that if the moments are matched  locally in time and state space, asymptotically for vanishing time step and mesh width, the value of the control problem associated with the discrete process converges to the original value function.

These approximations are intimately linked to finite difference schemes for the Hamilton-Jacobi-Bellman PDEs which govern the value function. Indeed, a standard method of deriving suitable transition probabilities for the Markov chains is based on finite difference approximations to the generator of the process. The moment matching conditions translate into consistency of the corresponding finite difference schemes.
It is evident that only those finite difference schemes lead to viable Markov chain approximations, where the elements of the discretisation matrix, a scaled version of which are the candidate transition probabilities, are non-negative.

Barles and Souganidis \cite{barlessouganidis} prove that under certain mild regularity conditions and a comparison principle, a consistent, monotone and stable approximation scheme converges to the true viscosity solution of a second-order non-linear PDE. 
Since then, many papers have added to the analysis, including convergence orders, of monotone approximation schemes especially for equations of Hamilton-Jacobi-Bellman(HJB)-type, see, e.g., \cite{krylov97, bonnanszidani03, barlesjakobsen02, barlesjakobsen05, barlesjakobsen07, debrabantjakobsen12}.
The requirement of monotonicity is equivalent to a condition on positivity of the coefficients (see \cite{forsythlabahn}), thus demonstrating a one-to-one correspondence between those finite difference scheme with generally provable convergence, and Markov chain approximations.

The focus of this paper is the construction of transition probabilities, or -- equivalently -- monotone discretisation operators,
in the two-dimensional case. We now review prior results from the literature, often
presented in the context of a (degenerate) elliptic PDE and we translate this into our language.

It is already recognised in \cite{kushner2013numerical} that the approximation becomes more difficult when the off-diagonal terms of the covariance matrix are comparable to or larger than the diagonal terms. This is the case for large correlations and anisotropic variances.

More specifically, a result tracing back to \cite{motzkinwasow53} implies that a positive coefficient discretisation using only neighbouring nodes exists if and only if the covariance matrix is diagonally dominant.
Moreover, \cite{crandalllions96} show that if the problem is degenerate (correlation $\rho=1$), then the problem can be approximated locally, i.e., using transitions to mesh points which are a finite number of steps away, if and only if the ratio between the volatilities in both directions is rational.
This will also be directly visible from our analysis.
Such equations with perfect correlation arise, e.g., for stochastic control problems with partial information \cite[Chapter~6, {\S}10]{fleming2012deterministic}.
Specifically, \cite{monoyios2010utility} studies financial derivative hedging and valuation in incomplete markets with unknown (constant) drift of the asset price processes.
After application of the K{\'a}lm{\'a}n-Bucy filter, the process for the filtered drift is driven by the same Brownian motion as the asset price process, resulting in degenerate multi-dimensional diffusions. The present results can therefore be of interest in this setting.

In the case where monotone schemes exist in the irrational but non-degenerate case, \cite{kocan95} derives lower and upper bounds on the required minimal stencil size using Diophantine equations and approximation results for irrationals by rationals. Upper and lower bounds are given in terms of inverse fractional powers of the parameter $1-|\rho|$, demonstrating that the approximation schemes become necessarily and increasingly non-local as the problem degenerates. This feature is also visible in our analysis, however, the numerical illustrations will show that except for extreme correlations a very small number of neighbouring points suffices. In the case of $\rho=0.99$ and for moderate anisotropy, only 3 layers of cells in each direction are needed.

An alternative characterisation of monotone schemes via dual cones is given in \cite{bonnanszidani03}, and this is the work most closely related to the present paper. This method of analysis is constructive as it naturally leads to the discretisation coefficients as the solution of a linear program.
An efficient implementation is discussed in \cite{bonnansottenwaelterzidani04}. 
Where our analysis differs is that \cite{bonnanszidani03} formulate their approximations in terms of linear combinations of standard one-dimensional diffusion approximations in all possible directions to lattice points in a certain vicinity, while our primitive quantity are the transition probabilities directly. In so doing, we are able to condense the conditions in Section 5 of  \cite{bonnanszidani03} into a single formula which lends itself easily to numerical illustrations and can be simplified further to an easy to evaluate rule of thumb.


The novel contributions of this paper are:
\begin{itemize}
\item
a precise characterisation of the model parameters (variances and correlations) for which local Markov chain approximations on a lattice exist (i.e., the process moves not more than a given fixed number of nodes in every timestep), see Theorem \ref{theo:mono};
\item
equivalently, a characterisation of monotone discretisations of (degenerate) elliptic PDEs with fixed given stencil size;
\item
a very simple necessary condition, see Corollary \ref{cor:region};
\item
numerical illustrations of these theoretical results which show that the simple necessary condition from Corollary \ref{cor:region} is usually also sufficient.
\end{itemize}

The rest of this paper is structured as follows. In Section \ref{sec:prelims}, we define the notation, reduce the problem to the simplest representative setting and formulate a linear program the solution of which are the transition probabilities. Section \ref{sec:results} derives the main results on the existence of positive solutions by analysing the existence of solutions to a dual problem.
In Section \ref{sec:numerics}, we present and discuss numerical parameter studies of the formulae, and Section \ref{sec:conclusions} offers conclusions.


\section{Preliminaries}
\label{sec:prelims}

We start by considering a two-dimensional It{\^o} process of the form
\begin{eqnarray}
\label{dX1}
d X_{1,t} &=& \sigma_1(X_{1,t},X_{2,t},t) \, dW_{1,t}, \\
d X_{2,t} &=& \sigma_2(X_{1,t},X_{2,t},t) \, dW_{2,t},
\label{dX2}
\end{eqnarray}
where $(W_1,W_2)$ is a two-dimensional standard Brownian motion with local correlation $\rho$,
\begin{eqnarray*}
d[W_{1}, W_{2}]_t = \rho(X_{1,t},X_{2,t},t) \, dt,
\end{eqnarray*}
and $\sigma_1$ and $\sigma_2$ are local volatility functions.
We omit drift terms as their inclusion does not lead to any extra difficulty and does not alter the conclusions
(see Remark \ref{rem:upwind} later). 

The most practically relevant application is to control problems, where the coefficients also depend on a control parameter. For the purposes of this study, we do not explicitly include this dependence in the notation and it is understood that the approximation is for a given control.


\subsection*{Moment matching conditions}

We seek to approximate $X = (X_1,X_2)$ by a discrete-time Markov chain $\widehat{X} = (\widehat{X}(n))_{n\ge 0}$ on a lattice $(x_{1,i},x_{2,j}) = (i h, j H)$, $i,j\in\Z$, where $h,H>0$ are given mesh widths in the two directions. We interpret $\widehat{X}(n)$ as approximation to $(X_{1,t_n}, X_{2,t_n})$,
where $t_{n} = n k$ are uniformly spaced by some $k>0$.


Following \cite{kushner1990numerical}, we set up conditions by which $\widehat{X}$
asymptotically matches the moments of the increments of $X$, specifically,
\begin{eqnarray}
\label{expectation}
\mathbb{E}\left(\widehat{X}(n+1) - \widehat{X}(n) \vert \widehat{X}(n) = (x_{1,i},x_{2,j}) \right) &=& o(k),
\end{eqnarray}
for all $(x_{1,i},x_{2,j})$,
\begin{eqnarray*}
\mathbb{E}\left((\widehat{X}_1(n+1)-\widehat{X}_1(n))^2\vert \widehat{X}(n) = (x_{1,i},x_{2,j}) \right) &=& \sigma_1^2(x_{1,i},x_{2,j},t_n) \, k + o(k),
\end{eqnarray*}
similar for $\widehat{X}_2$, and
\begin{eqnarray*}
\mathbb{E}\left((\widehat{X}_1(n+1)-\widehat{X}_1(n))(\widehat{X}_2(n+1)-\widehat{X}_2(n))
\vert \widehat{X}(n) = (x_{1,i},x_{2,j}) \right) && \\
 & \hspace{-16 cm} =&\hspace{-8 cm}\rho(x_{1,i},x_{2,j},t_n) \sigma_1(x_{1,i},x_{2,j},t_n) \sigma_2(x_{1,i},x_{2,j},t_n) \, k + o(k).
\end{eqnarray*}
See also, e.g., 
\cite{kushner2013numerical}.
The convergence of the approximations is then shown in \cite{kushner1990numerical} for Lipschitz coefficients $\sigma$ and $\rho$.

The moments of $\widehat{X}$ are determined by the transition probabilities
\begin{eqnarray*}
 p_{i,j}^{l,m} &\equiv& \mathbb{P}\left(\widehat{X}(n+1) = (x_{1,i},x_{2,j}) \vert \widehat{X}(n) = (x_{1,l},x_{2,m}) \right),
\end{eqnarray*}
where we omit for brevity that $ p_{i,j}^{l,m}= p_{i,j}^{l,m}(k,h,H)$.

%

We can focus on a single point, $t=0$, $n=0$, $l=0$, $m=0$ to establish the matching conditions.
For ease of notation, we define
\begin{eqnarray*}
 p_{i,j} &\equiv& \mathbb{P}\left(\widehat{X}(1) = (x_{1,i},x_{2,j}) \vert \widehat{X}(0) = (0,0) \right).
\end{eqnarray*}

The first moment condition (\ref{expectation}) becomes
\begin{eqnarray*}
\sum_{i,j=-\infty}^\infty i h p_{ij} = o(k)
\qquad
\text{and}
\qquad
\sum_{i,j=-\infty}^\infty j H p_{ij} = o(k).
\end{eqnarray*}

We assume now that $\sigma_1(0,0,0), \sigma_2(0,0,0)>0$. The case where exactly one coefficient is zero reduces to a one-dimensional problem and the standard approximation to the Brownian driver using a simple symmetric random walk is admissible; the case where both coefficients are zero is trivial. These statements still hold in the case with drift (see Remark \ref{rem:upwind}).
By a suitable rescaling of the variables, we can then reduce the equations to the special case $\sigma_1(0,0,0) \equiv 1$,
$\sigma_2(0,0,0) \equiv \sigma$, $\rho(0,0,0) \equiv \rho$, where $\sigma>0$. Then, similar to above, we need for the second moments
\begin{eqnarray*}
\sum_{i,j=-\infty}^\infty (i h)^2 p_{ij} &= & k + o(k), \\
\sum_{i,j=-\infty}^\infty (j H)^2 p_{ij} &=& \sigma^2 k + o(k), \\
\sum_{i,j=-\infty}^\infty (i h)(j H) p_{ij} &=& \rho \sigma k + o(k).
\end{eqnarray*}

There is some arbitrariness in the choice of $k$, $h$ and $H$. We fix a refinement regime where
\begin{eqnarray}
\label{meshrat}
\frac{k}{h^2} = 1
\end{eqnarray}
fixed and, with $\sigma>0$, keep the ratio
\begin{eqnarray}
\label{rat}
\rat &\equiv& \frac{1}{\sigma} \frac{H}{h}
\end{eqnarray}
also fixed as a measure of the anisotropy of the discretised problem.
We re-iterate that $R$ is a local quantity which depends on $x_1$ and $x_2$ if $\sigma$ is not constant. If $\sigma_i$ depends only on $x_i$ for $i=1,2$, it is possible to adapt $h$ and $H$ as a function of $x_1$ and $x_2$, respectively, and hence construct a tensor product mesh with effectively constant $R$. In the general case, where one of the $\sigma_i$ depends non-trivially on both $x_1$ and $x_2$, such a procedure will not be possible.

It will be seen that $\rat$ and $\rho$ are the key parameters for the locality of the processes.
If we chose a different refinement strategy (not keeping (\ref{meshrat}) and (\ref{rat}) constant as $k\rightarrow 0$), this would change the actual probabilities but only by a scaling factor (for $(i,j)\neq (0,0)$).

For computations, it is necessary to restrict the possible transitions to a finite set, and for computational efficiency it will be advantageous to choose this set as small as possible.
We denote by some $0<s \in \mathbb{N}$ the largest distance in any direction between a pair of nodes with non-zero transition probability.

Summarising, the moment matching conditions for $k\rightarrow 0$ are
\begin{eqnarray}
\label{second-order}
\hspace{-0.7 cm}
&& \sum_{i,j=-s}^s i^2 p_{ij} = 1, \qquad \qquad  \!\!\! \sum_{i,j=-s}^s j^2 p_{ij} = \rat^{-2}, \qquad  \sum_{i,j=-s}^s i j p_{ij} = \rat^{-1} \rho, \\
\label{first-order}
\hspace{-0.7 cm}&& \sum_{i,j=-s}^s i p_{ij} = 0,  \qquad \qquad \sum_{i,j=-s}^s j p_{ij} = 0,  \label{first-order-last} \\
\label{zero-order}
\hspace{-0.7 cm}&& \sum_{i,j=-s}^s p_{ij} = 1.
\end{eqnarray}
These are $4s+6$ equations for $(2s+1)^2$ unknowns in total. See also \cite{crandalllions96}.

\subsection*{Relation to finite difference schemes}

The generator of the process (\ref{dX1}), (\ref{dX2}) is given by the elliptic differential operator
\begin{eqnarray}
\label{modelpde}
\mathcal{L} u \equiv \harf u_{x_1 x_1} + \rho \sigma u_{x_1 x_2} + \harf \sigma^2 u_{x_2 x_2}. 
\end{eqnarray}

We consider
a finite difference discretisation of (\ref{modelpde}), written as
\begin{equation}
\label{modelscheme}
(L_h u_h)_{n,m} \equiv
\frac{1}{h^2} \left( \sum_{\tiny \begin{array}{c} i,j =-s \\ (i,j)\neq (0,0) \end{array}}^s 
p_{ij} u_{n+i,m+j} - 
u_{0,0}  \!\!\! \sum_{\tiny \begin{array}{c} i,j =-s \\ (i,j)\neq (0,0) \end{array}}^s 
p_{ij} \right)
\end{equation}
for some $0<s \in \mathbb{N}$, on a mesh with widths $h$ and $H$ in the two directions.
We omit $H$ in the notation and assume it is picked as a function of $h$ (see also (\ref{rat})).
Then $u_{n,m}$ is a numerical approximation to $u(n h, m H)$ and we write $u_h=(u_{n,m})_{n,m\in\Z}$, also $L_h: \R^{\Z^2}\rightarrow \R^{\Z^2}$. This includes all linear schemes which use $(2s+1)^2$ mesh points in a rectangle around the central node to approximate $\mathcal{L}$.

The scheme (\ref{modelscheme}) is consistent with the PDE (\ref{modelpde}) if and only if for all smooth $\phi$
\begin{eqnarray}
\label{consistency}
\lim_{h,H\rightarrow 0} 
\Big( \mathcal{L} \phi - L_h \phi
\Big) = 0,
\end{eqnarray}
where
\[
L_h \phi
=
\frac{1}{h^2} \left( \sum_{\tiny \begin{array}{c} i,j =-s \\ (i,j)\neq (0,0) \end{array}}^s 
p_{ij} \phi(\cdot+i h,\cdot + j H) - 
\phi \!\!\! \sum_{\tiny \begin{array}{c} i,j =-s \\ (i,j)\neq (0,0) \end{array}}^s 
p_{ij} \right).
\]
By Taylor expansion, this is seen to be equivalent to the second, first and zero order conditions (\ref{second-order})--(\ref{zero-order}).

The scheme  (\ref{modelscheme}) is monotone in the standard sense (see \cite{forsythlabahn}) 
if and only if it has positive ``off-diagonal'' coefficients , 
\begin{equation}
\label{nonnegativity}
p_{ij} \ge 0, \qquad -s\le i, j\le s, \quad (i,j) \neq (0,0).
\end{equation}


The difficulty in constructing monotone discretisations arises from the cross-derivative term in (\ref{modelpde}) which is present for $\rho \neq 0$.
In this case, a standard discretisation using an iterated central difference is never monotone.
In \cite{sulem2006}, p.~154 of Section 9.4, a scheme is given which takes into account the sign of the covariance terms of multi-dimensional diffusions, here determined by the sign of $\rho$:
\begin{eqnarray*}
\frac{\partial^2 \phi}{\partial x_1 \partial x_2}(x_1,x_2) &\;\sim\;& \pm \
\frac{2 \phi(x_1,x_2) + \phi(x_1+h,x_2\pm H)  + \phi(x_1-h,x_2\mp H)}{ h \ H}
\\
&\;\;& \hspace{-1.8 cm} \mp \ \frac{\phi(x_1+h,x_2) + \phi(x_1-h,x_2) + \phi(x_1,x_2+H) + \phi(x_1,x_2-H)}{h \ H},
\end{eqnarray*}
where the top signs refer to the case $\rho\ge 0$ and the bottom ones to $\rho\le 0$. We denote by $\sim$ that the discretisation is consistent with the mixed partial derivative, i.e., for smooth $\phi$ the right-hand side converges to the left-hand expression as $h,H\rightarrow 0$.

The resulting scheme for (\ref{modelpde}), using the above \emph{seven-point stencil} for the cross-derivative and standard central differences for the second derivatives in $x_1$ and $x_2$ directions, is easily seen to be consistent overall,
and monotone for given $\sigma$ and  $\rho$ provided that $h$ and $H$ are chosen such that $|\rho| \le \min(\rat,1/\rat)$ with $\rat$ as
per (\ref{rat}).
A more general condition on monotonicity in the multi-dimensional drift-diffusion setting is given in \cite{sulem2006}.

%

\begin{remark}
\label{rem:upwind}
If the equations (\ref{dX1}) and (\ref{dX2}) contain drift terms, an upwind discretisation can be used to construct transition probabilities.
For simplicity, we focus on the case where instead of (\ref{dX1})  we have
\begin{eqnarray*}
d X_{1,t} &=& \mu_1(X_{1,t},X_{2,t},t) \, dt + \sigma_1(X_{1,t},X_{2,t},t) \, dW_{1,t} 
\end{eqnarray*}
but without drift in (\ref{dX2}).
Without loss of generality, we study again the point $(0,0,0)$, and assume $\mu \equiv \mu_1(0,0,0)\ge 0$. Then, for sufficiently small $k$,
\begin{eqnarray*}
\widehat{p}_{10} \; =\;   p_{10} + \mu \frac{k}{h}, \qquad
\widehat{p}_{00} \; =\;   p_{00} - \mu \frac{k}{h}, \qquad
\widehat{p}_{ij} \; = \; p_{ij}, \;\; \text{\small $(i,j) \notin \{(0,0),(0,1)\}$}
\end{eqnarray*}
are positive transition probabilities which are consistent with the generator in the sense of (\ref{consistency}), and hence satisfy the moment matching conditions.
Here, $(p_{ij})$ is a solution to the drift-free problem with arbitrary diffusion coefficients (possibly zero).
The case $\mu<0$ and with multiple drifts is similar.
\end{remark}


\section{Main results}\label{Section_Proof}
\label{sec:results}

We show the following result, 
which gives a uniform treatment of the conditions listed in Section 5 of \cite{bonnanszidani03}.
The proof is based on duality, which allows us to turn  equality constraints for a large number of primal variables into a single inequality for a single variable.


\begin{theorem}
\label{theo:mono}
There is a non-negative 
solution to (\ref{second-order})--(\ref{zero-order}) 
if and only if
\begin{eqnarray}
\label{geninequ}
\inf_{0<z_1<\frac{2|\rho|}{\rat}} \left( \rat^2 z_1 + \max_{\xi\in \s} \left(2\xi - \xi^2 z_1\right) \right) &\ge& 2 \rat |\rho|,
\end{eqnarray}
where $\s=\{i/j: 1\le i,j \le s\}$.
\end{theorem}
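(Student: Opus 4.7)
My plan is to approach the theorem through LP duality. The constraints \eqref{second-order}--\eqref{zero-order} together with the nonnegativity of the $p_{ij}$ form a linear feasibility problem: find $p \in \mathbb{R}_{\ge 0}^{(2s+1)^2}$ with $Ap = b$, where the column of $A$ indexed by $(i,j)$ is $(1, i, j, i^2, j^2, ij)^T$ and $b = (1, 0, 0, 1, R^{-2}, R^{-1}\rho)^T$. By Farkas' lemma, this is feasible if and only if every $y = (y_0, \ldots, y_5)$ satisfying the dual inequalities
\[
y_0 + y_1 i + y_2 j + y_3 i^2 + y_4 j^2 + y_5 ij \le 0 \quad \text{for all } (i,j) \in [-s,s]^2 \cap \Z^2
\]
also satisfies $b^T y = y_0 + y_3 + R^{-2}y_4 + R^{-1}\rho y_5 \le 0$.

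My next step would be to cut the dimension of the dual using the symmetries of the lattice. Averaging the dual constraint at $(i,j)$ with that at $(-i,-j)$ kills the first-order terms, so one may set $y_1 = y_2 = 0$ without loss. The reflection $(i,j)\mapsto (i,-j)$ is a symmetry of the lattice, and pairing it with $y_5 \mapsto -y_5$ and $\rho \mapsto -\rho$ reduces to the case $\rho \ge 0$ with $y_5 \ge 0$. The dual constraint at $(0,0)$ forces $u := -y_0 \ge 0$; positive homogeneity of the dual cone permits normalising $u = 1$ whenever $u > 0$, with the degenerate case $u = 0$ handled as a limit. The primal is then feasible if and only if the three-variable LP of maximising $y_3 + R^{-2} y_4 + R^{-1}\rho y_5$ subject to $y_3 i^2 + y_4 j^2 + y_5 ij \le 1$ for all $(i,j)\in[-s,s]^2\cap\Z^2$ has optimal value at most $1$.

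The heart of the proof is to reduce this LP to the two-parameter inequality \eqref{geninequ}. For any lattice point $(i,j)$ in the open first quadrant, all the constraints associated with scalar multiples $(ki, kj)$ that also lie in the lattice collapse to the single tightest one at the farthest such multiple, parametrised by the slope $\xi = i/j \in \mathcal{S}$. The active constraints at the LP optimum are therefore indexed by slopes in $\mathcal{S}$, possibly together with the axis constraints $y_3 \le 1/s^2$ and $y_4 \le 1/s^2$. Introducing $z_1$ as the Lagrange multiplier of the active ray constraint of slope $\xi$ and eliminating $y_3, y_4, y_5$ yields, after straightforward algebra, the scalar inequality
\[
R^2 z_1 + 2\xi - \xi^2 z_1 \ge 2R|\rho|,
\]
which must hold for the worst-case $\xi \in \mathcal{S}$ at every admissible $z_1$; taking the maximum in $\xi\in\mathcal{S}$ and the infimum in $z_1$ reproduces precisely \eqref{geninequ}. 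The restriction $z_1 \in (0,2|\rho|/R)$ comes from the requirement that the parametrised dual polynomial be indefinite and correctly oriented relative to the sign of $\rho$; outside this interval the condition is automatic because the right-hand side of the displayed inequality becomes non-positive while the left-hand side stays positive.

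The main technical obstacle is the reduction in the previous paragraph: one has to identify which families of tight lattice constraints can arise at LP optima and then verify that the elimination produces the clean $2\xi - \xi^2 z_1$ term on the slope side and $R^2 z_1$ term on the target-moment side, with the two axis constraints accounting for the constants. Once this reduction is in hand, checking the equivalence of the derived scalar inequality with \eqref{geninequ} and the correct treatment of the open infimum interval is a routine convex-analytic calculation.
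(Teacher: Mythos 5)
Your overall strategy---Farkas' lemma, symmetrisation to remove the first-order dual variables, reduction to $\rho\ge 0$, and a final scalar condition over the slopes $\xi=i/j\in\mathcal{S}$---is the same as the paper's. The genuine gap lies in how you treat the zero-order equation (\ref{zero-order}). The paper first discharges it on the primal side (arguing that $p_{00}$ can absorb the slack), so its Farkas dual is a \emph{homogeneous} system in five variables; homogeneity is precisely what makes only the ratios $i/j\in\mathcal{S}$ matter, and normalising by the $ij$-multiplier $|y_3|$ is what produces $z_1,z_2$, the two-sided inequality (\ref{gencond}) and the window $0<z_1<2|\rho|/R$ by explicit algebra. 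You instead keep the multiplier $y_0$ of (\ref{zero-order}) and normalise $-y_0=1$, which makes the dual LP \emph{inhomogeneous}: the constraints $y_3i^2+y_4j^2+y_5ij\le 1$ along a ray no longer reduce to a slope-only family (the right-hand side is $1$, not $0$, so the maximal extent of each direction inside the box $[-s,s]^2$ enters, not just $\xi$), and the axis constraints $y_3\le 1/s^2$, $y_4\le 1/s^2$ cannot be absorbed into constants that drop out of the final formula.

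Indeed, your LP, carried out correctly, does not land on (\ref{geninequ}) at all: it retains the effect of the mass constraint, which (\ref{geninequ}) does not see. For instance, with $s=1$, $\rho=0$ and $R<1$, taking $y_4=1$, $y_3=y_5=0$ gives LP value $R^{-2}>1$, so your criterion declares the six-equation system infeasible---correctly, since $\sum_{i,j} j^2 p_{ij}=R^{-2}>1=\sum_{i,j} p_{ij}$ is then impossible---whereas (\ref{geninequ}) holds vacuously (the infimum is over an empty interval). Hence the algebra claimed in your key paragraph, where $z_1$ appears as an unspecified Lagrange multiplier and the window $(0,2|\rho|/R)$ is asserted rather than derived, cannot produce exactly (\ref{geninequ}); to reach the stated formula you must, as the paper does, eliminate (\ref{zero-order}) \emph{before} dualising (and even that elimination step needs care, since the bound $\sum_{(i,j)\neq(0,0)}p_{ij}\le\sum_{i,j} i^2p_{ij}$ used there is delicate for the $i=0$ column). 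A smaller point: the reflection $(i,j)\mapsto(i,-j)$ only exchanges $\rho$ with $-\rho$; the sign of the $ij$-multiplier of a violating certificate when $\rho>0$ requires the short separate argument the paper gives, not the symmetry alone.
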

\begin{proof}
The proof uses Farkas' Lemma (see, e.g., \cite{nocedalwright}, p.~327), which ascertains for $b\in \mathbb{R}^m$, $A\in \mathbb{R}^{m\times n}$ that
\begin{eqnarray*}
\exists \; x \in \mathbb{R}^n: \quad 
Ax=b, \; x\ge0 
\qquad \Leftrightarrow \qquad 
\nexists \; y \in \mathbb{R}^m: \quad 
A^T y \ge 0, \; b^T y < 0.
\end{eqnarray*}

We first observe that we can drop equation (\ref{zero-order}) because if there is a non-negative solution vector $(p_{ij})$ with $(i,j)\neq(0,0)$ to
(\ref{second-order}) and (\ref{first-order}), it follows from (\ref{second-order}) that
\[
\sum_{(i,j) \neq (0,0)} p_{ij} \le \sum_{(i,j) \neq (0,0)} i^2 p_{ij} = \sum_{(i,j)} i^2 p_{ij}  = 1,
\]
and hence we can find $p_{00}\ge 0$ to satisfy (\ref{zero-order}), without changing the other equalities.

For given $\rat$, $\rho$ and $s$, instead of analysing the existence of a solution to (\ref{second-order})--(\ref{zero-order}) and (\ref{nonnegativity}) it is hence equivalent to analyse the existence of a $y \in \mathbb{R}^5$ such that
\begin{eqnarray}
\label{firstineq}
i^2 y_1 + j^2 y_2 + i j y_3 + i y_4 + j y_5 &\ge& 0 \quad \forall -s\le i,j\le s, \\
y_1 +  \rat^{-2} y_2 + \rat^{-1} \rho y_3 &<& 0.
\label{secondineq}
\end{eqnarray}

Denote the first expression by $p(\rat;i,j;y_1,y_2,y_3,y_4,y_5)$, 
then by comparing $p(\rat;i,j;\ldots)$ and $p(\rat;-i,-j;\ldots)$ it is seen directly that
\[
\max_{y_4,y_5} \min_{i,j} p(\rat;i,j;y_1,y_2,y_3,y_4,y_5) = \min_{i,j} p(\rat;i,j;y_1,y_2,y_3,0,0).
\]
So we can take $y_4=y_5=0$ in (\ref{firstineq}), and
setting $j=0$, it follows $y_1\ge 0$, and similarly $y_2\ge 0$. \\

For simplicity, assume now $\rho>0$ (the case $\rho<0$ is similar). \\

Then, from (\ref{secondineq}), $y_3 < 0$.
Dividing (\ref{firstineq}) and (\ref{secondineq}) through by $|y_3|$, 
defining $z_1 = 2 y_1/|y_3|$, $z_2=2 y_2/|y_3|$,
there is a solution to (\ref{firstineq}), (\ref{secondineq}) if and only if there is a solution $z_1, z_2\ge 0$ to
\begin{eqnarray}
\label{firstsimple}
i^2 z_1 + j^2 z_2 - 2 i j &\ge& 0 \quad \forall -s\le i,j\le s, \\
\rat^2 z_1 -2 \rho \rat + z_2 &<& 0.
\end{eqnarray}
The inequality (\ref{firstsimple}) is always true if $z_1,z_2\ge 0$ and $j=0$ or $i=0$. For $j>0, i\ge 0$, it is easy to see (dividing by $j^2$ and checking the signs of individual terms) that (\ref{firstsimple}) is equivalent to
\begin{eqnarray}
\label{secondsimple}
\min_{1\le i,j \le s} \left[ (i/j)^2 z_1 - 2 (i/j) + z_2\right] \,\ge\, 0 \quad \!\! \Leftrightarrow \quad \!\! \min_{\xi \in \s} \; \left[\xi^2 z_1 - 2 \xi + z_2\right] \ge 0, \hspace{-1cm}
\end{eqnarray}
where $\s=\{i/j: 1\le i,j \le s\}$. The same conclusion holds by a similar argument for $i>0, j\ge 0$, and for the general case we note that the inequalities for $(-i,-j)$ are identical to $(i,j)$, and the left-hand side of (\ref{firstsimple}) for $ij<0$ is larger than for $ij>0$. 
So we are looking for $z_1, z_2\ge 0$ such that
\begin{eqnarray}
\label{gencond}
- \min_{\xi\in \s} (\xi^2 z_1 - 2 \xi) \ \le \ z_2 \ < \ - \rat^2 z_1 + 2 \rat \rho. 
\end{eqnarray}
For the right-hand side to be positive it is needed that
\[
z_1 < \frac{2 \rho}{R},
\]
and the left-hand side is smaller than the right-hand side if
\[
\rat^2 z_1 - \min_{\xi\in \s} (\xi^2 z_1 - 2 \xi) = \rat^2 z_1 + \max_{\xi\in \s} \left(2\xi - \xi^2 z_1\right) < 2 \rat \rho.
\]
Both inequalities can simultaneously be satisfied by some $z_1$ if and only if
\begin{eqnarray}
\label{nonex}
\inf_{0<z_1<\frac{2|\rho|}{\rat}} \left( \rat^2 z_1 + \max_{\xi\in \s} \left(2\xi - \xi^2 z_1\right) \right) &<& 2 \rat \rho.
\end{eqnarray}

The case $\rho<0$ is similar and the statement follows. 

%
%
\end{proof}


\begin{corollary}
\label{cor:region}
For $s< |\rho| \max(\rat,1/\rat)$, there is no solution to (\ref{second-order})--(\ref{nonnegativity}).
\end{corollary}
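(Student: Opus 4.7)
The plan is to invoke Theorem \ref{theo:mono} contrapositively: non-existence of a non-negative solution is equivalent to the infimum in (\ref{geninequ}) being strictly less than $2\rat|\rho|$. Writing
\[
f(z_1) = \rat^2 z_1 + \max_{\xi \in \s}\bigl(2\xi - \xi^2 z_1\bigr),
\]
I will exhibit, in each sub-regime covered by the hypothesis $s < |\rho|\max(\rat, 1/\rat)$, a value of $z_1 \in (0, 2|\rho|/\rat)$ at which $f(z_1) < 2\rat|\rho|$. The hypothesis amounts to $s < \rat|\rho|$ \emph{or} $s < |\rho|/\rat$, and I probe the corresponding endpoint of the feasible interval in each case.

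For the sub-case $s < \rat|\rho|$, I send $z_1 \to 0^+$. The linear term $\rat^2 z_1$ vanishes, and since $s = s/1 \in \s$ is the largest element of $\s$, the inner maximum tends to $2s$. Thus $f(z_1) \to 2s < 2\rat|\rho|$ by hypothesis, and continuity of $f$ (a max of finitely many affine functions) yields a concrete interior $z_1$ with $f(z_1) < 2\rat|\rho|$.

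For the sub-case $s < |\rho|/\rat$, I instead let $z_1 \uparrow 2|\rho|/\rat$. At the (excluded) right endpoint the first term equals $2\rat|\rho|$, so the strategy is to show the inner maximum is strictly negative there. The parabola $g(\xi) = 2\xi\bigl(1 - \xi|\rho|/\rat\bigr)$ attains its unconstrained maximum at $\xi^* = \rat/(2|\rho|)$, which under the hypothesis satisfies $\xi^* < 1/(2s) < 1/s$; so $g$ is strictly decreasing on $[1/s, s] \supset \s$, its maximum over $\s$ is $g(1/s) = (2/s)\bigl(1 - |\rho|/(s\rat)\bigr)$, and this is strictly negative precisely because $|\rho|/(s\rat) > 1$ is the hypothesis. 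Continuity from the left then gives an interior $z_1$ with $f(z_1) < 2\rat|\rho|$.

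I expect the only subtlety to be picking the right endpoint for each sub-case and noting that the two sub-cases are exchanged by the symmetry $\rat \leftrightarrow 1/\rat$ (together with the involution $\xi \leftrightarrow 1/\xi$ of $\s$) already encoded in the $\max$ of the statement. Otherwise the argument is a direct boundary evaluation, with no non-trivial optimisation over $z_1$ required.
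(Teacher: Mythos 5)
Your argument is correct, and it half-coincides with the paper's. For the sub-case $s<\rat|\rho|$ you do exactly what the paper does: let $z_1\to 0^+$, observe that the inner maximum tends to $2s$ (attained at the largest element $s\in\s$), and conclude from $2s<2\rat|\rho|$ that the infimum in (\ref{geninequ}) is violated, so Theorem \ref{theo:mono} gives non-existence. Where you differ is the sub-case $s<|\rho|/\rat$: the paper disposes of it in one line by invoking the symmetry of the underlying moment-matching system under $\rho\leftrightarrow-\rho$ and $\rat\leftrightarrow 1/\rat$ (i.e.\ swapping the roles of $i$ and $j$ in (\ref{second-order})--(\ref{nonnegativity})), whereas you probe the other endpoint $z_1\uparrow 2|\rho|/\rat$ of the feasible interval and check directly that the downward parabola $\xi\mapsto 2\xi(1-\xi|\rho|/\rat)$ has its vertex at $\xi^\ast=\rat/(2|\rho|)<1/(2s)$, hence is decreasing on $[1/s,s]\supset\s$ and negative at $\xi=1/s$ exactly when $|\rho|/(s\rat)>1$; left-continuity of the piecewise-affine maximum then yields an interior $z_1$ violating (\ref{geninequ}). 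Your route stays entirely inside the criterion of Theorem \ref{theo:mono} and never needs to argue that the criterion (or the primal system) is invariant under $\rat\mapsto 1/\rat$ --- an invariance that is not syntactically obvious from (\ref{geninequ}) and which the paper's one-line symmetry appeal leaves implicit --- at the price of one extra elementary computation; the paper's version is shorter and makes the structural symmetry of the problem explicit. Your closing remark that the two sub-cases are ``exchanged by the symmetry'' is not actually used and could be dropped, since both cases are verified directly.
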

\begin{proof}
Letting $z_1$ go to zero in (\ref{gencond}), the expression on the left approaches $2s$ and the expression on the right $2 \rat \rho$, hence
the chain of inequalities can always be satisfied with some $z_2>0$ if $s<\rat \rho$. The result follows by the symmetry between $\rho$ and $-\rho$ and $R$ and $1/R$.
\end{proof}

\section{Discussion and numerical illustrations}
\label{sec:numerics}

We now discuss the values for $\rat$ and $\rho$ which satisfy (\ref{nonex}), i.e.\ the converse of (\ref{geninequ}),  such that no viable approximation exists.

By completing the square one sees that
\[
\max_{\xi\in \s} \left(2\xi - \xi^2 z_1\right) = 
\max_{\xi\in \s}\left( 
 \frac{1}{z_1} - \left(\xi \sqrt{z_1} - {1}/{\sqrt{z_1}} \right)^2
 \right)
\le \frac{1}{z_1},
\]
with equality if $z_1 \in \s$, with $\s$ defined in Theorem \ref{theo:mono}. Similarly,
\[
\inf_{z_1>0} \left( \rat^2 z_1 + \max_{\xi\in \s} \left(2\xi - \xi^2 z_1\right) \right) \le
\inf_{z_1>0}
 \left(\rat \sqrt{z_1} - 1/\sqrt{z_1}\right)^2 + 2 \rat, 
\]
where the 
right-hand-side equals $2 \rat$ if and only if $z_1 = 1/\rat$.
From this we see that a monotone discretisation exists for all $\rho$ if and only if $\rat \in \s$, in particular $\rat$ rational (see also \cite{crandalllions96,kocan95}).

The maximum over $\xi$ in (\ref{nonex}) is attained at values of $\xi(z_1)$ which are piecewise constant in $z_1$ ($\s$ is a finite set).
For such intervals of $z_1$ in which $\xi(z_1)$ is constant at some $\xi$, from (\ref{nonex})
\begin{eqnarray}
\label{repinequ}
\inf_{z_1>0} \left( \rat^2 z_1 -2 \rat |\rho| + 2\xi - \xi^2 z_1 \right) &<& 0,
\end{eqnarray}
so for any pair $(z_1,\rat)$ for which the bracketed expression is negative, this is also true for $z_1$ in an interval $(z_-(\rat),z_+(\rat))$.
These regions are shown in Figure \ref{fig:rzplots}, left, for $s=1,3,5$.

Conversely, for given $\rat$, there is a solution $(z_1,z_2)$ to the dual problem and hence no solution to the primal problem,
if a horizontal line through $(0,\rat)$ intersects any of these regions. These lines are also shown in the plot, and correspond to the
intersections of the horizontal lines with the white areas on the right-hand-side plots.

It is also clear from (\ref{geninequ}) that for each $\rat$ there is a value $0\le \rho_{max}(\rat) \le 1$ such that a monotone discretisation exists for all $|\rho|\le \rho_{\max}(\rat)$.
We denote the set of all these pairs by
\[
\mathcal{\rat}=\{(\rat,\rho): \ -\rho_{\max}(\rat) \le \rho \le \rho_{\max}(\rat)\},
\]
shown  in the right-hand column of Figure \ref{fig:rzplots}, for $s=1$, $s=3$ and $s=5$ fixed.
By symmetry, it holds that $\rho_{\max}(1/\rat) = \rho_{\max}(\rat)$ for all $\rat>0$; we therefore only show the range $\rat\ge 1$ and $\rho\ge 0$.

\begin{figure}
\includegraphics[width=0.495\columnwidth]{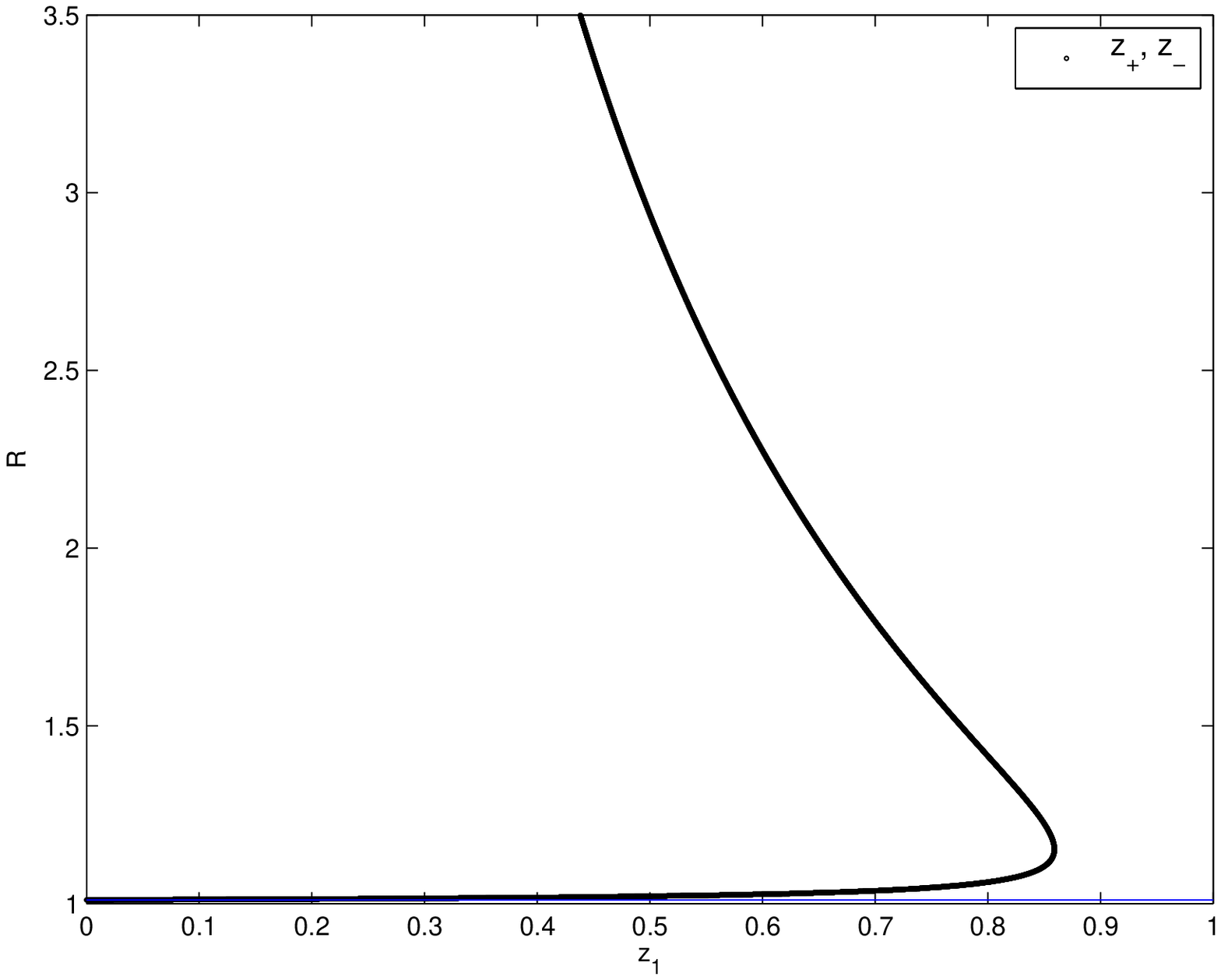} \hfill
\includegraphics[width=0.495\columnwidth]{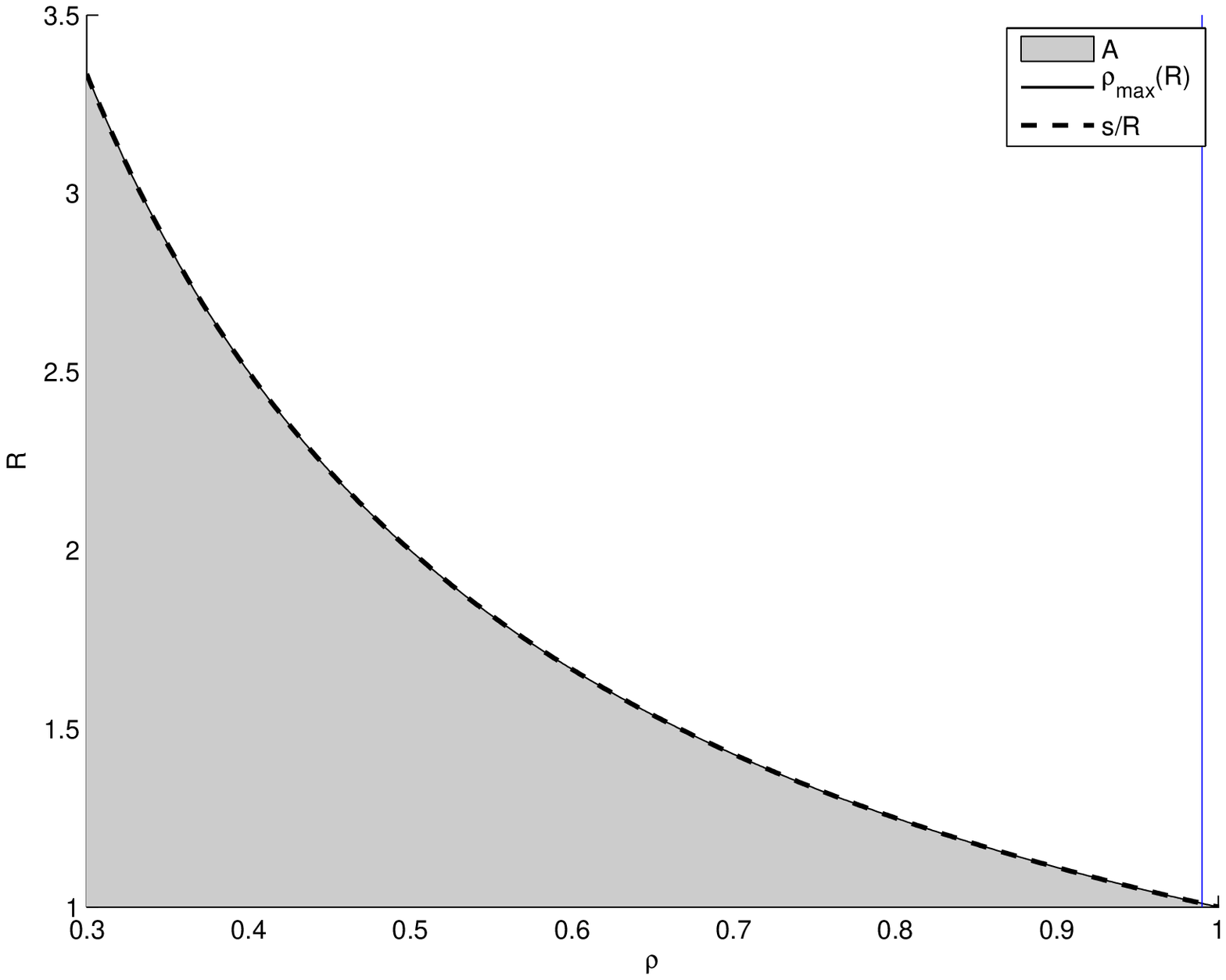} \\
\includegraphics[width=0.495\columnwidth]{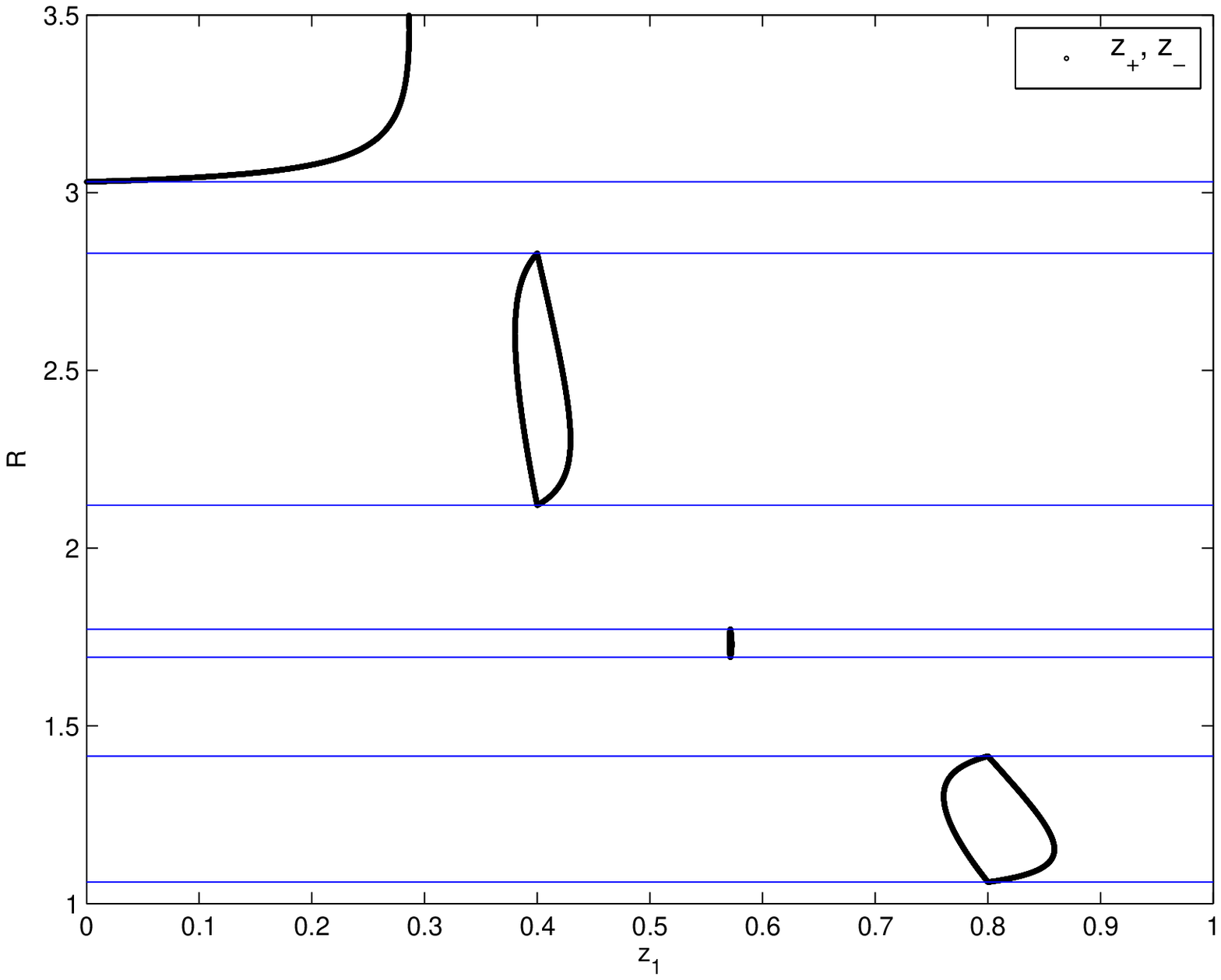} \hfill
\includegraphics[width=0.495\columnwidth]{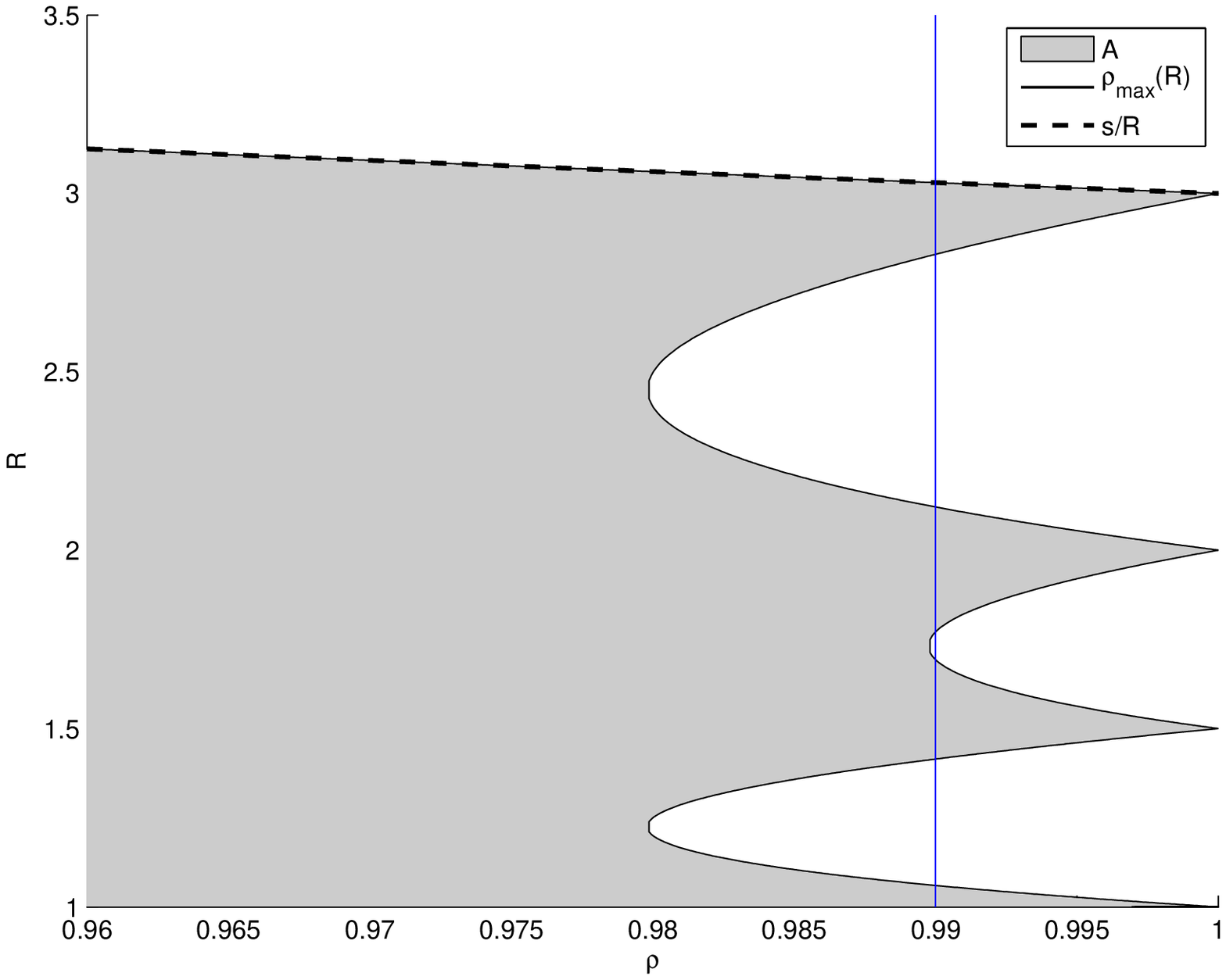} \\
\includegraphics[width=0.495\columnwidth]{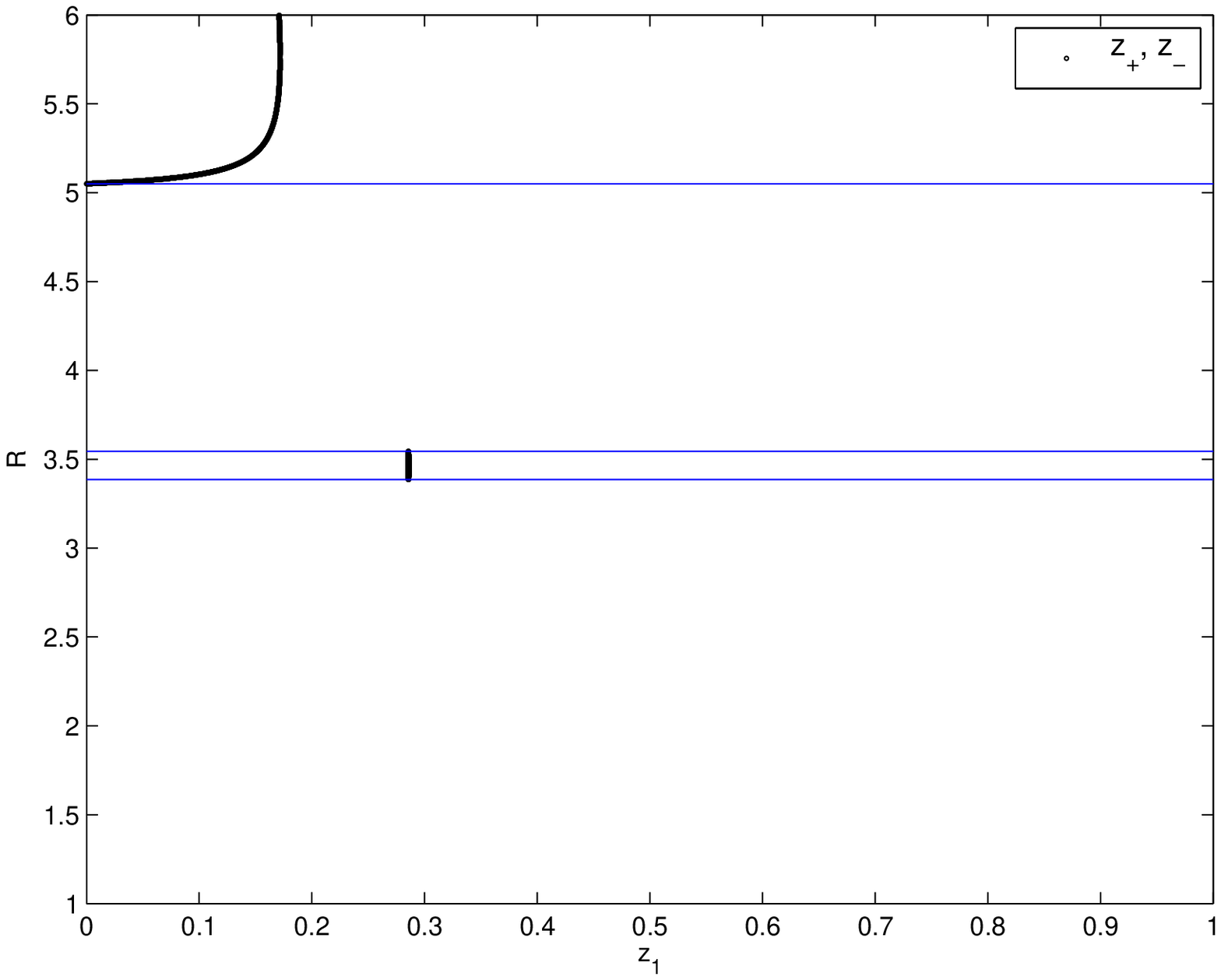} \hfill
\includegraphics[width=0.495\columnwidth]{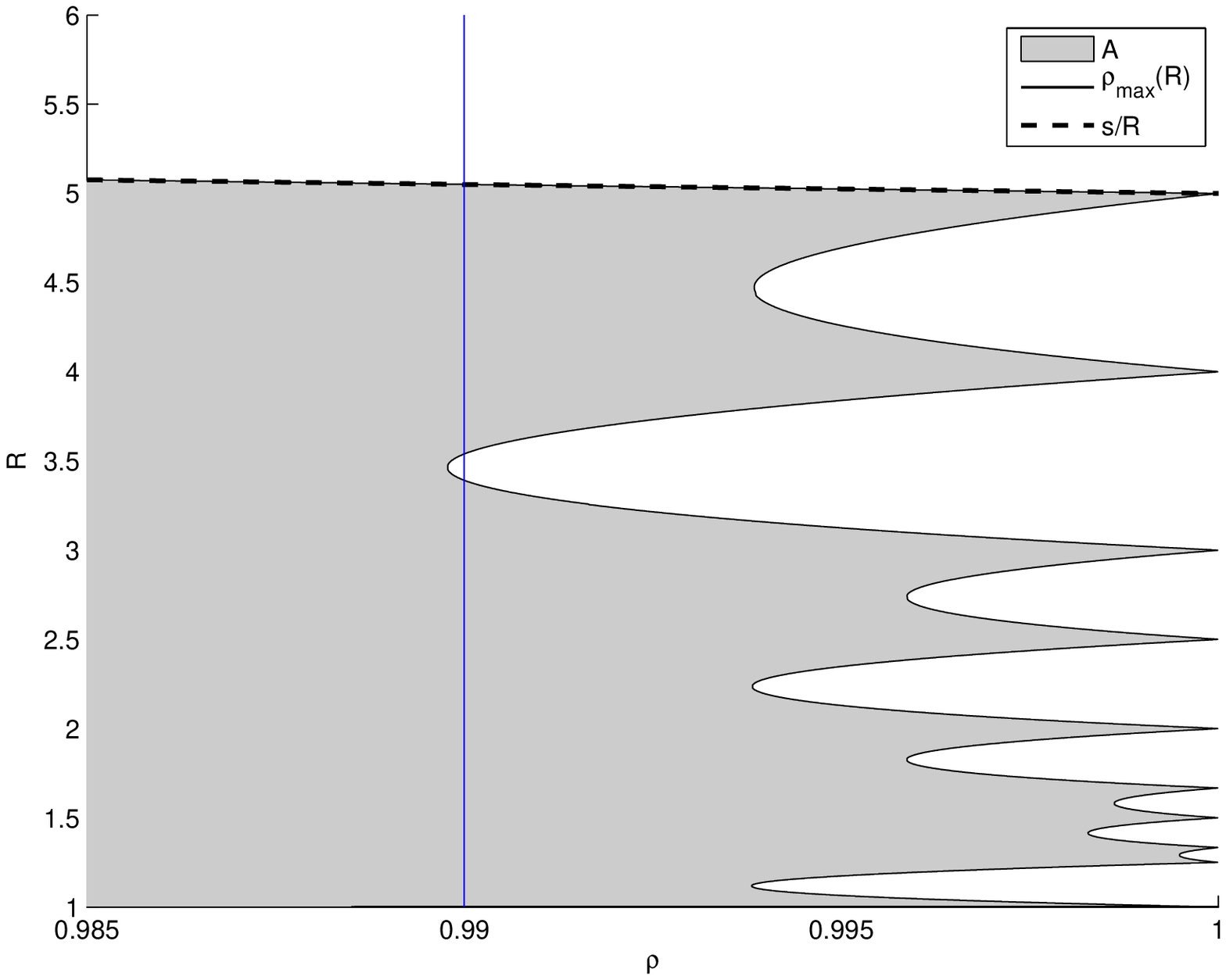}
\caption{Left: 
Given $R$, for values of $z_1$ between  $z_-(R)$ and $z_+(R)$ the dual problem has a solution (see the discussion following (\ref{repinequ})),
for $\rho=0.99$ and $s=1$ (top), $s=3$ (middle), and $s=5$ (bottom).
Right: In the same order for $s=1,3,5$, the region $A$ of pairs $(\rho,\rat)$, which is the subgraph of $\rho_{\max}$, for which the primal problem has a solution. The dashed curve $s/R$ is the simple upper bound from Corollary \ref{cor:region}, while the situation for $R<s$ is more complicated and fully described by (\ref{geninequ}).
The vertical lines at $\rho=0.99$ in the right plots intersect the shaded area at intervals which are reproduced by the horizontal lines in the left-hand plots, tangential to the curves $z_-$ and $z_+$.}
\label{fig:rzplots}
\end{figure}


The only values of $R$ for which there is a positive solution for all $\rho \in [-1,1]$ are $\rat\in \s$.
For example, in the bottom right plot for $\rat=5$ this is seen by the spikes at
\[
\rat \in \{1,5/4,4/3,3/2,5/3,2,5/2,3,4,5\}.
\]
For such $\rat=i/j \in \s$, where $i$ and $j$ are integers no larger than $s$, a discretisation that has these properties is the seven-point stencil applied to the nodes
which are $i$ steps away from the centre node in one direction and $j$ in the other.
For general $\rat$, the seven-point stencil loses monotonicity for certain values of $\rho$ (close to $\pm 1$).
The above proof and illustration show that any other local stencil must suffer the same shortcomings (see the result from \cite{crandalllions96} mentioned above).

This is unlikely to be a problem in practice for moderate $\rat$, $1/s \le \rat \le s$. For instance, for $s\ge 3$, a positive coefficient discretisation exists for $\rho$ up to a value close to one (precisely, 0.98 for $s=3$), 
and this value grows as $s$ increases.


For very anisotropic problems with $\rat>s$, however, we see that there is a sharp restriction $\rho\leq s/\rat$ on allowable $\rho$ (see Corollary \ref{cor:region} and Figure \ref{fig:rzplots}, right column). 

Consider, for instance, a finite difference discretisation of the two-dim\-en\-sion\-al Black-Scholes PDE (with zero interest rates),
\begin{eqnarray}
\label{bspde}
\frac{\partial V}{\partial t} +
\harf  \sigma_1^2 S_1^2 \frac{\partial^2 V}{\partial S_1^2} +
\rho \sigma_1 \sigma_2 S_1 S_2 \frac{\partial^2 V}{\partial S_1 \partial S_2} +
\harf  \sigma_2^2 S_2^2 \frac{\partial^2 V}{\partial S_2^2} 
= 0
\end{eqnarray}
for $S_1,S_2,t>0$,
on a uniform isotropic mesh, $h=H$. Then, locally, $\rat=(\sigma_1 S_1)/(\sigma_2 S_2)$. 
With mesh coordinates for $S_1$ and $S_2$ in $\{h, 2h,\ldots, S_{max}\}$,
$R$ ranges from $O(h)$ to $O(1/h)$.
This shows that extreme $\rat$ can happen naturally for locally degenerate equations.

To re-iterate, for the two-dimensional Black-Scholes PDE there is no consistent and monotone discretisation with fixed stencil on a uniform mesh, for \emph{any} $\rho\neq 0$.

\section{Conclusions}
\label{sec:conclusions}

We have studied the required step width of Markov chain approximations, or -- equivalently -- the necessary width of monotone finite difference stencils, to approximate two-dimensional diffusions.
The analysis in the previous sections shows that this width has to be at least $|\rho| R$, where $R$ is a measure of the anisotropy of the problem. As seen from the shaded areas in Figure \ref{fig:rzplots}, this condition is almost sufficient except when $|\rho|$ is extremely close to 1.

As the difficulty arises from the correlations (cross-derivatives) between two variables, the two-dimensional case highlights problems which also exist, in exacerbated form, in higher-dimensional settings. Indeed, the seven-point stencil is monotone only because negative matrix entries resulting from the cross-derivative between $x_1$ and $x_2$ are compensated by positive entries resulting from $x_1$ and $x_2$ derivatives. In higher dimensions, where there are multiple cross-derivatives involving $x_1$, say, the viable range of $\rho$ shrinks with increasing dimension,
but we do not analyse this further here.


It follows that the only generally applicable 
schemes require $s$ to depend on $h$, specifically $s(h)\rightarrow \infty$ as $h\rightarrow 0$. An example are the ``linear interpolation semi-Lagrangian'' (LISL) schemes of \cite{camilli1995approximation, debrabantjakobsen12}. There, nodes at a distance $k$ are involved in the discretisation where $h=o(k)$, such that $s\rightarrow \infty$ as $h\rightarrow 0$. The schemes are consistent even if  $s\rightarrow \infty$ arbitrarily slowly, although for optimal accuracy $s=O(h^{-1/2})$.
As our analysis shows, this degree of generality is only required in exceptional cases. It would therefore seem advantageous to switch to this wide stencil scheme only where a standard compact stencil fails to produce positive coefficients. A switching strategy based on this principle is proposed in \cite{ma2014unconditionally}, and their empirical finding for an uncertain volatility model is that in the majority of nodes the standard scheme can be selected, in line with our results.

\bibliography{compact}
\bibliographystyle{plain}

\end{document}